\author{Stefan Handschuh}
\title{research notes}
\newcommand{\BIGOP}[1]{\mathop{\mathchoice%
{\raise-0.22em\hbox{\huge $#1$}}%
{\raise-0.05em\hbox{\Large $#1$}}{\hbox{\large $#1$}}{#1}}}
\newtheorem{theorem}{Theorem}[section]
\newtheorem{remark}[theorem]{Remark}
\newcommand{\K}{\mathbb {K}}
\newcommand{\N}{\mathbb {N}}
\title{Changing the topology of Tensor Networks}
\author{Stefan Handschuh\thanks{\href{mailto:stefan.handschuh@mis.mpg.de}
        {\nolinkurl{stefan.handschuh@mis.mpg.de}}, Max-Planck-Institute for
        Mathematics in the Sciences, Leipzig, Germany}}
\begin{document}

\maketitle

\begin{abstract}
    In many applications, it is needed to change the topology of a tensor
    network directly and without approximation. This work will introduce a
    general scheme that satisfies these needs. We will describe the procedure
    by two examples and show its efficiency in terms of memory consumption and
    speed in various numerical experiments. In general, we are going to
    provide an algorithm to add an edge to a tensor network as well as an
    algorithm to remove an edge unless the resulting network is a connected
    graph.
\end{abstract}
{\bf Keywords:} tensor format, tensor network, conversion, TT, TC, PEPS

\section{Introduction}
Tensor networks are of interest especially in quantum chemistry (see
\cite{PhysRevB.82.205105, PhysRevB.83.134421}) but also for general large data
sets, they can become of practical use.\\
Changing the topology of a tensor network, i.e. representing a tensor given in
structure $A$ as a tensor in structure $B$ may result in the opportunity to
treat originally differently structured tensors equally.

One main interest is to convert an arbitrary tensor network into a tree
structured tensor network which has nice properties in terms of stability and
computational effort. More about the indicated properties can be found in
\cite{HRS:2010}, \cite{springerlink:10.1007/s00041-009-9094-9} and
\cite{oseledets:2295}.

We want to consider only conversion from connected graph structured tensors
to connected graph structured tensors where rank $1$ edges will be neglected.

\section{Problem description}
One of the main problems with arbitrary tensor networks is, that they might
contain cycles. This leads in the representation to be not stable (see
\cite{Landsberg:arXiv1105.4449}) which results in additional conditions that
have to be preserved while performing algorithms.
With having a procedure that \textit{stabilizes} the representation by
changing its structure to a tree structure (which is stable, see
\cite[Lemma 8.6 for the Tucker format and Lemma 11.55 for the Hierarchical
format]{H:2012_1} and \cite[Theorem 3.2]{EHHS:2011}) we can avoid the
constraints. For example, one could transform the structure into a tree,
perform stable computation with it and re-transform it back to the original
structure.

Another problem is that for general tensor networks the contraction (i.e.
carrying out the summations) is hard to perform in terms of the computational
cost. If an algorithm has to compute the inner product after each iteration,
this will become a main part of the whole cost of the algorithm. Contracting
tree structured tensor networks however, has a much smaller complexity that is
linear in the dimension of the tensor network.

Furthermore, it is important for the addition of tensors that are represented
in networks, that the formats of all terms have a matching structure.
Performing computations with differently structured tensor networks is in
general harder than performing computations with equally structured ones. This
even holds for different tree structures which should be avoided. See
\cite[Section 5.2]{springerlink:10.1007/s00041-009-9094-9} for an example.

\section{Direct conversion from TC to TT w/o approximation} \label{sec:tc2tt}
Our first example will be the topology change from a ring structure (Tensor
Chain or TC, see below) to a string structure (Tensor Train or TT, see
below). The two topologies differ only in one edge and therefore
they have a lot in common which we can use for our advantage.

Let $d \in \N_{>2}, n_1, \ldots, n_d, r_1, \ldots, r_d \in \N$, then we define
    \[v = \sum_{j_1, \ldots, j_d = 1}^{r_1, \ldots, r_d} v_1(j_d, j_1) \otimes
            v_2(j_1, j_2) \otimes \ldots \otimes v_d(j_{d-1}, j_d) \in
            \bigotimes_{\mu=1}^d \K^{n_\mu} \]
with
\begin{align*}
    v_1 & : \{1, \ldots, r_d\} \times \{1, \ldots, r_1\} \rightarrow \K^{n_1}
            \\
    v_i & : \{1, \ldots, r_{i-1}\} \times \{1, \ldots, r_i\} \rightarrow
            \K^{n_i} \qquad \text{for } i = 2, \ldots, d
\end{align*}
as a \textit{Tensor Chain representation} with representation rank
$(r_1, \ldots, r_d)$, see Figure \ref{fig:tc}.

\begin{figure}[H]
    \begin{center}
        \includegraphics[width=1.00\textwidth]{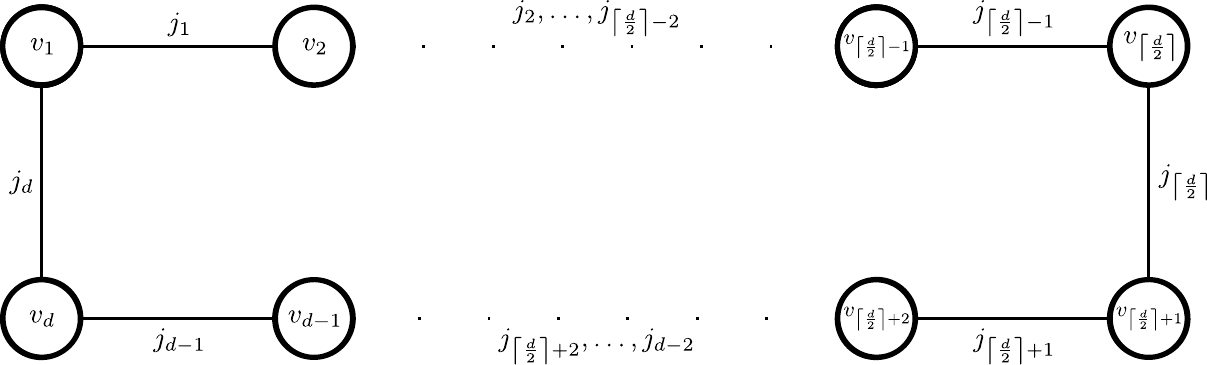}
        \caption{Tensor Chain of order $d$}
        \label{fig:tc}
    \end{center}
\end{figure}

Our goal structure is defined by
    \[ \tilde{v} = \sum_{j_1, \ldots, j_{d-1}}^ {\tilde{r}_1, \ldots,
            \tilde{r}_{d-1}} v_1(j_1) \otimes v_2(j_1, j_2) \otimes \ldots
            \otimes v_{d-1}(j_{d-2}, j_{d-1}) \otimes v_d(j_{d-1}) \]
with
\begin{align*}
    v_1 &: \{1, \ldots, \tilde{r}_1\} \rightarrow \K^{n_1}\\
    v_i &: \{1, \ldots, \tilde{r}_{i-1}\} \times \{1, \ldots, \tilde{r}_i\}
            \rightarrow \K^{n_i} \qquad \text{for } i = 2, \ldots, d-1\\
    v_d &: \{1, \ldots, \tilde{r}_{d-1}\} \rightarrow \K^{n_d}
\end{align*}
which is called a \textit{Tensor Train representation} with representation rank
$(\tilde{r}_1, \ldots, \tilde{r}_{d-1}) \in \N^{d-1}$ and visualized in Figure
\ref{fig:tt}. This is a special case of the Tensor Chain format (see
\cite{K:2010}).

\begin{figure}[H]
    \begin{center}
        \includegraphics[width=1.00\textwidth]{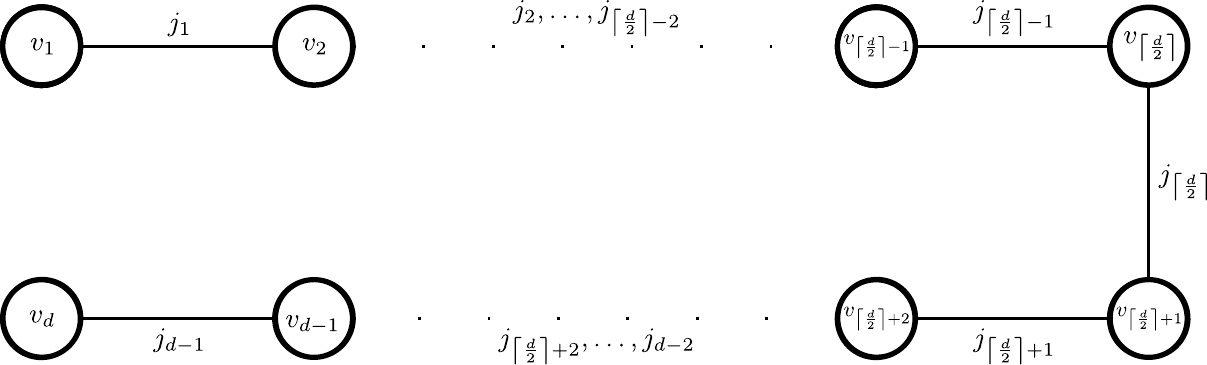}
        \caption{Tensor Train of order $d$}
        \label{fig:tt}
    \end{center}
\end{figure}

To be able to use the given ring structure of the Tensor Chain, we will convert
it to the simplest possible order $d$ tree, which is the Tensor Train. We
successively move one specific edge of the ring further and further to
the edge that is at the \textit{center} of the ring without this specific
\textit{moving} edge.
The following part visualizes this scheme.

In the following description, we are using the singular value decomposition
(SVD) to decompose a matrix. We could also utilize other decompositions, like
the QR decomposition, but they have the same computational complexity as the
SVD. A main advantage of the SVD is, that it provides a best rank $k$
approximation for matrices which we want to use later in approximated results.

\subsection*{1st step}
We define
    \[v_{1,2} (j_d, j_2) := \sum_{j_1 = 1}^{r_1} v_1(j_d, j_1) \otimes v_2
            (j_1, j_2)\]
and interpret $v_{1,2}$ as $n_1 \times n_2\cdot r_d \cdot r_2$ matrix on which
we apply the SVD to obtain
    \[ v_{1,2} (j_d, j_2) = \sum_{j_1 = 1}^{\tilde{r}_1} v'_1(j_1) \otimes
            v'_2(j_1, j_2, j_d), \]
where $\tilde{r}_1 \leq n_1$ is the full SVD rank. Consequently,
    \[v = \sum_{j_1=1}^{\tilde{r}_1} \sum_{j_2, \ldots, j_d=1}^{r_2, \ldots,
            r_d} v'_1(j_1) \otimes v'_2(j_1, j_2, j_d) \otimes v_3(j_2, j_3)
            \otimes \ldots \otimes v_d(j_{d-1}, j_d) ,\]
whose schematic representation is Figure \ref{fig:tc_step1}.
\begin{figure}[H]
    \begin{center}
        \includegraphics[width=1.00\textwidth]{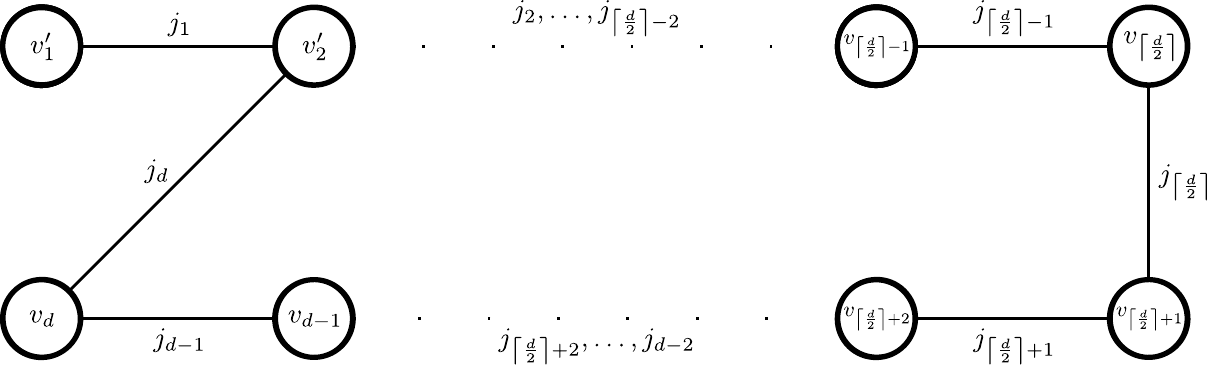}
        \caption{Structure after the 1st step}
        \label{fig:tc_step1}
    \end{center}
\end{figure}

\subsection*{2nd step}
Analogous to step 1, we define
    \[v_{d-1,d} (j_{d-2}, j_d) := \sum_{j_{d-1} = 1}^{r_{d-1}} v_{d-1}(j_{d-2},
            j_{d-1}) \otimes v_d (j_{d-1}, j_d)\]
and interpret $v_{d-1,d} (j_{d-2}, j_d)$ as $n_{d-1} \times n_d \cdot r_{d-2}
\cdot r_{d}$ matrix, of which we compute the SVD, in order to get
    \[v_{d-1,d} (j_{d-2}, j_d) = \sum_{j_{d-1}=1}^{\tilde{r}_{d-1}} v'_{d-1}
            (j_{d-2}, j_{d-1}, j_d) \otimes v'_d(j_{d-1})\]
where again $\tilde{r}_{d-1}$ is the full SVD rank. The result is
\begin{align*}
    v = \sum_{j_1, j_{d-1}=1}^{\tilde{r}_1, \tilde{r}_{d-1}} \sum_{j_2, \ldots,
            j_{d-2}, j_d=1}^{r_2, \ldots, r_{d-2}, r_d} & v'_1(j_1) \otimes
            v'_2(j_1, j_d, j_2) \otimes v_3(j_2, j_3) \otimes \ldots \otimes
            v_{d-2}(j_{d-3}, j_{d-2})\\
    & \otimes v'_{d-1}(j_{d-2}, j_{d-1}, j_d) \otimes v'_d(j_{d-1})
\end{align*}
as visualized in Figure \ref{fig:tc_step2}.

\begin{figure}[H]
    \begin{center}
        \includegraphics[width=1.00\textwidth]{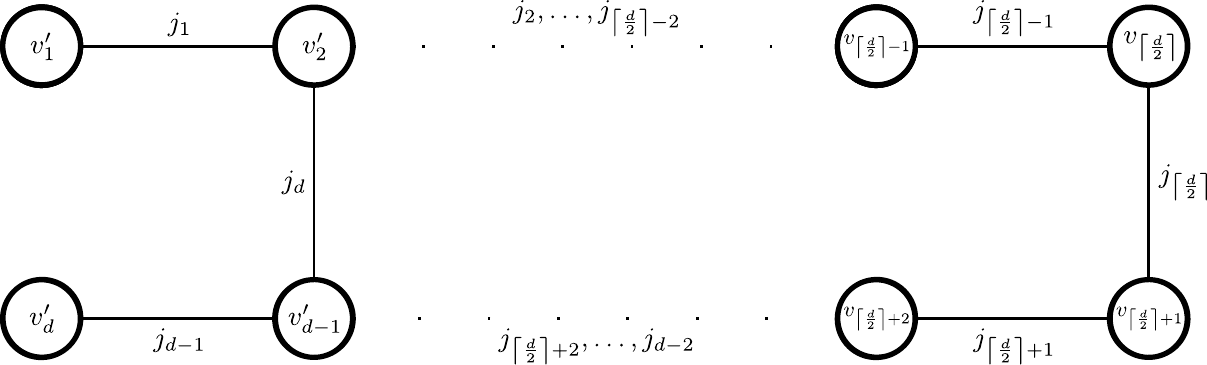}
        \caption{Structure after the 2nd step}
        \label{fig:tc_step2}
    \end{center}
\end{figure}

\subsection*{Penultimate step}
We apply the above written scheme successively, we end up in a situation that
is equivalent to Figure \ref{fig:tc_preprelaststep}.
\begin{figure}[H]
    \begin{center}
        \includegraphics[width=1.00\textwidth]{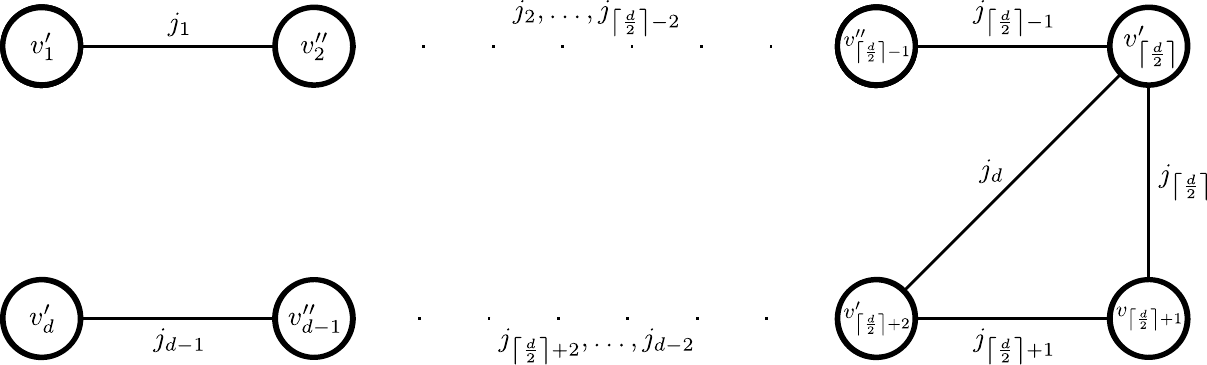}
        \caption{Structure before the penultimate step}
        \label{fig:tc_preprelaststep}
    \end{center}
\end{figure}
The penultimate step is to apply the SVD to
\begin{align*}
    \sum_{j_{\left\lceil \frac{d}{2} \right\rceil + 1} = 1}^{r_{\left\lceil
            \frac{d}{2} \right\rceil + 1}} v_{\left\lceil \frac{d}{2}
            \right\rceil+1} \left(j_{\left\lceil \frac{d}{2} \right\rceil},
            j_{\left\lceil \frac{d}{2} \right\rceil+1}\right) \otimes v'
            _{\left\lceil \frac{d}{2} \right\rceil+2} \left(j_{\left\lceil
            \frac{d}{2} \right\rceil+1}, j_{\left\lceil \frac{d}{2}
            \right\rceil+2}, j_d\right) & \stackrel{SVD}{=}\\
    \sum_{j_{\left\lceil \frac{d}{2} \right\rceil + 1} = 1}^{\tilde{r}
            _{\left\lceil \frac{d}{2} \right\rceil + 1}} v'_{\left\lceil
            \frac{d}{2} \right\rceil+1} \left(j_{\left\lceil \frac{d}{2}
            \right\rceil}, j_{\left\lceil \frac{d}{2} \right\rceil+1},
            j_d\right) \otimes v''_{\left\lceil \frac{d}{2} \right\rceil+2}
            \left(j_{\left\lceil \frac{d}{2} \right\rceil+1}, j_{\left\lceil
            \frac{d}{2} \right\rceil+2}\right)
\end{align*}
and obtain
\begin{align*}
    v = \sum_{j_1, \ldots, j_{\left\lceil \frac{d}{2} \right\rceil-1}, j
            _{\left\lceil \frac{d}{2} \right\rceil+1}, \ldots, j_{d-1} = 1}
            ^{\tilde{r}_1, \ldots, \tilde{r}_{\left\lceil \frac{d}{2}
            \right\rceil-1}, \tilde{r}_{\left\lceil \frac{d}{2}
            \right\rceil+1}, \ldots, \tilde{r}_{d-1}} \sum_{j_{\left\lceil
            \frac{d}{2} \right\rceil}, j_d = 1}^{r_{\left\lceil \frac{d}{2}
            \right\rceil}, r_d} & v'_1(j_1) \otimes v''_2(j_1, j_2) \otimes
             \ldots \otimes v''_{\left\lceil \frac{d}{2} \right\rceil-1}\left(
             j_{\left\lceil \frac{d}{2} \right\rceil-2}, j_{\left\lceil
             \frac{d}{2} \right\rceil-1}\right)\\
    & \otimes v'_{\left\lceil \frac{d}{2} \right\rceil}\left(j_{\left\lceil
            \frac{d}{2} \right\rceil-1}, j_{\left\lceil \frac{d}{2}
            \right\rceil}, j_d\right) \otimes v'_{\left\lceil \frac{d}{2}
            \right\rceil+1}\left(j_{\left\lceil \frac{d}{2} \right\rceil},
            j_{\left\lceil \frac{d}{2} \right\rceil+1}, j_d\right)\\
    & \otimes v''_{\left\lceil \frac{d}{2} \right\rceil+2}\left(j_{\left\lceil
            \frac{d}{2} \right\rceil+1}, j_{\left\lceil \frac{d}{2}
            \right\rceil+2}\right) \otimes \ldots \otimes v''_{d-1}(j_{d-2},
            j_{d-1}) \\
    & \otimes v'_d (j_{d-1})
\end{align*}
with the corresponding Figure \ref{fig:tc_prelaststep}.

\begin{figure}[H]
    \begin{center}
        \includegraphics[width=1.00\textwidth]{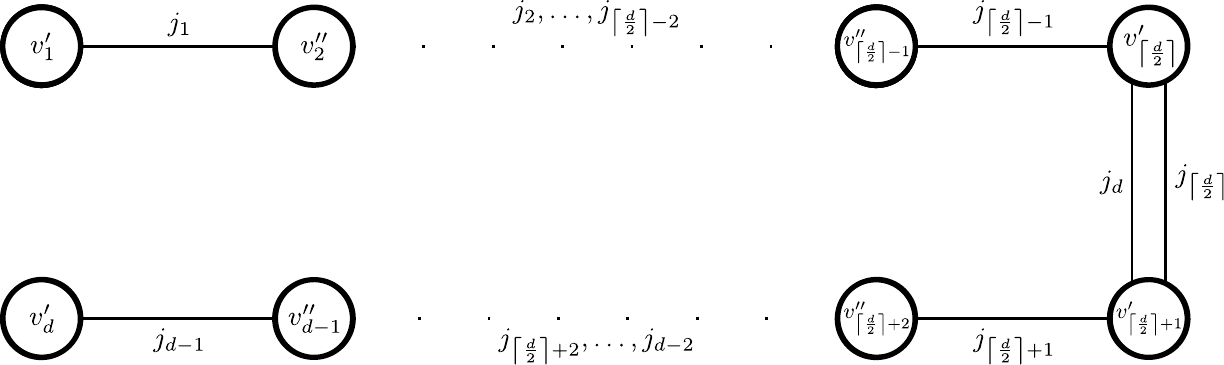}
        \caption{Structure after the penultimate step}
        \label{fig:tc_prelaststep}
    \end{center}
\end{figure}
\begin{remark} \label{rem:center_edge}
    Formally speaking, this structure is already the Tensor Train format since
    we can interpret edge $j_d$ and $j_{\left\lceil \frac{d}{2}\right\rceil}$
    as together as one edge with multiplied ranks.\\
    This situation however, can be improved by performing one additional SVD to
    combine the two edges to be able to obtain the real rank of the
    \textit{center} edge.
\end{remark}

\subsection*{Final step}
In the last step, we perform a singular value decomposition of
    \[\sum_{j_{\left\lceil \frac{d}{2} \right\rceil}, j_d=1}^{r_{\left\lceil
            \frac{d}{2} \right\rceil}, r_d} v'_{\left\lceil \frac{d}{2}
            \right\rceil}\left(j_{\left\lceil \frac{d}{2} \right\rceil-1},
            j_{\left\lceil \frac{d}{2} \right\rceil}, j_d\right) \otimes
            v'_{\left\lceil \frac{d}{2} \right\rceil+1}\left(j_{\left\lceil
            \frac{d}{2} \right\rceil}, j_{\left\lceil \frac{d}{2}
            \right\rceil+1}, j_d\right)\]
and obtain analogously to the previous step a structure that is visualized in
Figure \ref{fig:tc_laststep}. The distinction of $'$ and $''$ is important
since $''$ means that this node has been changed by two SVDs whereas $'$ stands
for one SVD. On the other hand, $\tilde{} $ indicates that a double edge has
been united.

\begin{figure}[H]
    \begin{center}
        \includegraphics[width=1.00\textwidth]{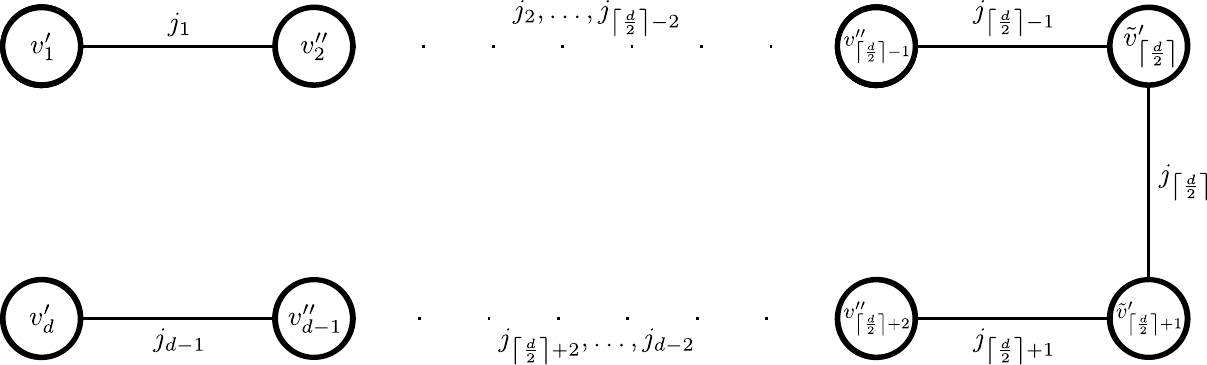}
        \caption{Completely converted structure}
        \label{fig:tc_laststep}
    \end{center}
\end{figure}

\subsection*{Ranks}
If we consider the new ranks $\tilde{r}_1, \ldots, \tilde{r}_{d-1}$, we have to
look at the dimension of the matrices that we decompose via the SVD. We have
\begin{align}
    \tilde{r}_1 &= n_1 \label{eq:tt_ranks1}\\
    \tilde{r}_i &= \min(n_i \cdot \tilde{r}_{i-1}, n_{i+1} \cdot r_d \cdot
            r_{i+1}) \leq \min(n^i, n_{i+1}\cdot r_d \cdot r_{i+1}) &
            \text{for } i = 2, \ldots, \left\lceil\frac{d}{2} \right\rceil -1
            \\
    \tilde{r}_{d-1} &= n_d\\
    \tilde{r}_i &= \min(n_{i+1} \cdot \tilde{r}_{i+1}, n_i \cdot r_d \cdot
            r_{i-1}) \leq \min(n^{d-i}, n_i \cdot r_d \cdot r_{i-1}) &
            \text{for } i=d-2, \ldots, \left \lceil \frac{d}{2} \right\rceil +1
            \label{eq:tt_ranks2}\\
    \tilde{r}_{\left \lceil \frac{d}{2} \right\rceil} &= \min(n_{\left \lceil
            \frac{d}{2} \right\rceil} \cdot \tilde{r}_{\left \lceil \frac{d}{2}
            \right\rceil-1}, n_{\left \lceil \frac{d}{2} \right\rceil+1} \cdot
            \tilde{r}_{\left \lceil \frac{d}{2} \right\rceil+1}) \leq \min(n^{
            \left\lceil \frac{d}{2} \right\rceil},n^2 \cdot r^2),
            \label{eq:tt_ranks3}
\end{align}
which is summarized in Figure \ref{fig:tt_ranks}.
\begin{figure}[H]
    \begin{center}
        \includegraphics[width=1.00\textwidth]{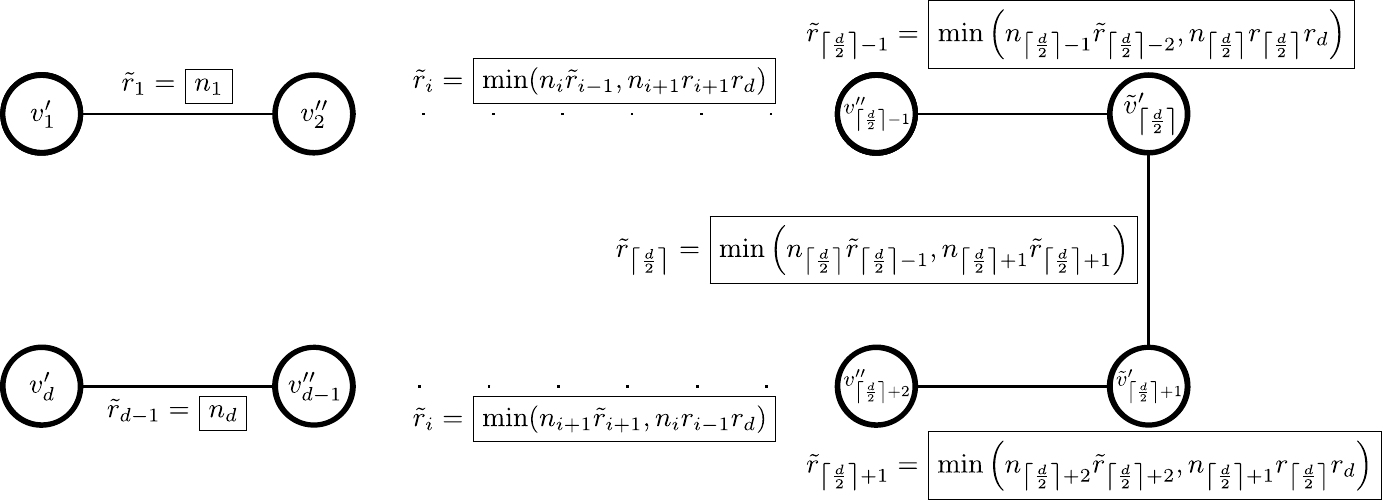}
        \caption{Final rank overview}
        \label{fig:tt_ranks}
    \end{center}
\end{figure}

\begin{remark}
    With this approach the upper bounds for the ranks are the full TT-ranks
    (see \cite{IVO:2009} and compare with \eqref{eq:tt_ranks1} --
    \eqref{eq:tt_ranks3}), but also available TC-ranks
    influence the resulting representation rank.
\end{remark}

\begin{theorem}
    The overall computational cost for the conversion is in
        \[\mathcal{O} \left((d-2) \cdot n^4 r^6 + n^6r^6\right),\]
    so it is linear in $d$ where $r := \max(r_1, \ldots, r_{d-1})$.
\end{theorem}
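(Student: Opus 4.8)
The plan is to charge the cost step by step, exploiting that the algorithm is nothing but a sequence of $d-1$ successive singular value decompositions: $d-2$ \emph{ordinary} steps that slide the moving edge $j_d$ inward from both ends toward the center, followed by one \emph{final} step that merges the two central nodes and absorbs $j_d$ into the center bond. I would bound the cost of a single ordinary step, multiply by $d-2$, add the cost of the final step, and verify that the ordinary steps contribute $\mathcal{O}(n^4 r^6)$ each while the central one contributes $\mathcal{O}(n^6 r^6)$. Throughout I write $n := \max_\mu n_\mu$ and use the given $r := \max(r_1,\ldots,r_{d-1})$.

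First I would pin down the number of steps. The moving edge starts between nodes $1$ and $d$; its left endpoint must travel to node $\lceil d/2 \rceil$ (that is $\lceil d/2 \rceil - 1$ slides) and its right endpoint to node $\lceil d/2 \rceil + 1$ (that is $d - \lceil d/2 \rceil - 1$ slides), for a total of $d-2$ ordinary decompositions, after which a single final SVD produces the genuine TT center edge. Since each slide is realized by exactly one SVD, the count $d-2$ (ordinary) $+\,1$ (final) is settled.

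Next I would bound one ordinary step, which splits into (a) contracting the shared bond to form the merged tensor $v_{i,i+1}$ (the generic analogue of $v_{1,2}$ from the first step) and (b) the SVD of its matricization. The decisive ingredient — and the point on which the whole estimate turns — is the rank bound \eqref{eq:tt_ranks1}--\eqref{eq:tt_ranks3}: although the naive estimate $\tilde{r}_i \le n^i$ would make the later steps explode, the second argument of the minimum gives the uniform bound $\tilde{r}_i \le n_{i+1}\, r_d\, r_{i+1} \le n r^2$, independent of $i$. With this, forming $v_{i,i+1}$ (output size times the contracted bond) costs $\mathcal{O}(n^3 r^5)$, and its matricization has format at most $(n^2 r^2) \times (n r^2)$; since the SVD of an $m \times k$ matrix costs $\mathcal{O}\!\left(\max(m,k)\,\min(m,k)^2\right)$, the SVD costs $\mathcal{O}(n^4 r^6)$ and dominates the contraction. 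Multiplying by $d-2$ yields the first term.

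Finally, for the central step I would repeat the same bookkeeping, now noting that \emph{both} surviving bonds reaching into the already-processed halves have size up to $n r^2$, so the matricization is at most $(n^2 r^2) \times (n^2 r^2)$; the SVD then costs $\mathcal{O}(n^6 r^6)$, dominating the $\mathcal{O}(n^4 r^6)$ contraction over the two inner bonds $j_{\lceil d/2 \rceil}$ and $j_d$, and giving the second term. Adding the two contributions produces $\mathcal{O}\!\left((d-2)\, n^4 r^6 + n^6 r^6\right)$, which is linear in $d$. The only genuine obstacle is the careful tracking of the index sizes at each node together with the consistent use of the uniform bound $\tilde{r}_i \le n r^2$; without it the per-step cost would appear to grow with $i$ and the linearity in $d$ would be lost.
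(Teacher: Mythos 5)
Your argument is essentially the paper's own proof: both rest on the uniform rank bound $\tilde{r}_i \le n\, r_d\, r_i \le n r^2$ from \eqref{eq:tt_ranks1}--\eqref{eq:tt_ranks2}, bound the ordinary-step matricizations by $(n^2 r^2) \times (n r^2)$ and the final central one by $(n^2 r^2) \times (n^2 r^2)$ via \eqref{eq:tt_ranks3}, and sum the SVD costs. You merely make explicit a few details the paper leaves implicit (the count of $d-2$ slides, the contraction cost, the $\mathcal{O}(\max(m,k)\min(m,k)^2)$ cost model), so the proposal is correct and matches the paper's route.
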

\begin{proof}
    Due to \eqref{eq:tt_ranks1} -- \eqref{eq:tt_ranks2}, we have
        \[\tilde{r}_i \leq n \cdot r_d \cdot r_i \leq n \cdot r^2 \qquad
                \forall i \in \{1, \ldots, d-1\} \setminus \{ \left\lceil
                \frac{d}{2} \right\rceil \}.\]
    Consequently, the matrices, that we have to decompose with the SVD have at
    most the size
        \[n \cdot \tilde{r}_i \times n \cdot r_d \cdot r \qquad \forall i \in
                \{1, \ldots, d-1\} \setminus \{ \left\lceil\frac{d}{2}
                \right\rceil \}\]
    except for the final step. There, the matrix has at most the size
        \[ n \cdot \tilde{r}_{\left\lceil \frac{d}{2} \right\rceil+1} \times n
        \cdot \tilde{r}_{\left\lceil \frac{d}{2} \right\rceil-1}\]
    due to \eqref{eq:tt_ranks3}, which finishes the proof.
\end{proof}

\begin{remark}
    Steps $1, 3, \ldots, d_1$ and $2, 4, \ldots, d_2$ are independent of each
    other and therefore parallelizable where
    \begin{align*}
        d_1 &:= \begin{cases}
            d-3 & \text{if } d \equiv 0 \mod 2,\\
            d-2 & \text{otherwise}
        \end{cases}
        \intertext{and}
        d_2 &:= \begin{cases}
            d-2 & \text{if } d \equiv 0 \mod 2,\\
            d-3 & \text{otherwise.}
        \end{cases}
    \end{align*}
\end{remark}
We can easily extend this scheme to more complex structures which we will do in
Section \ref{sec:peps2tt} by converting a $2d$ grid structured tensor into a
string structured tensor.

\subsection{Numerical example} \label{subsec:tc2tt_numerical_example}
All numerical experiments in this paper have been done with \cite{ESKWWHA:2012}
with the following setup:
\begin{itemize}
    \item the function values have are generated with a pseudo-random number
            generator
    \item each direction has $10$ entries, i.e. $n_1 = \ldots, n_d = 10$
    \item the representation rank of the tensor chain tensor is $ (r_1, \ldots,
            r_d) = (6, \ldots, 6)$
\end{itemize}
\begin{table}[H]
    \centering
    \begin{tabular}{c|c|c|c}
        $d$    & CPU-time   & Avg. rank & Max. rank \\
        \hline
        $4$    & $0.01s$    & $18.67$   & $36$  \\
        $10$   & $1557.59s$ & $188.44$  & $360$ \\
        $100$  & $1845.43s$ & $344.40$  & $360$ \\
        $1000$ & $3850.65s$ & $358.45$  & $360$
    \end{tabular}
    \caption{Exact TC to TT conversion}
    \label{tab:exacttc2tt}
\end{table}

In practice, it is often sufficient to convert a format only approximately
instead of a non-approximated conversion. Our approach can be easily changed to
an approximated conversion by using the SVD only up to a certain accuracy. We
will demonstrate this by simple computations with an allowed SVD error of
$10^{-10}$
\begin{table}[H]
    \centering
    \begin{tabular}{c|c|c|c|c}
        $d$     & CPU-time & Avg. rank & Max. rank & rel. error \\
        \hline
        $4$     & $0.01s$  & $18.67$   & $36$      & $1.49\cdot 10^{-8}$ \\
        $10$    & $2.93s$  & $30.22$   & $36$      & $1.61\cdot 10^{-7}$ \\
        $100$   & $56.6s$  & $35.47$   & $36$      & $7.12\cdot 10^{-7}$ \\
        $1000$  & $551s$   & $35.95$   & $36$      & $1.54\cdot 10^{-6}$ \\
        $10000$ & $6042s$  & $35.99$   & $36$      & $1.83\cdot 10^{-5}$
    \end{tabular}
    \caption{Approximated TC to TT conversion}
    \label{tab:approxtc2tt}
\end{table}

\begin{remark}
    We do not need to hold the whole tensor in the RAM since the conversion
    acts only locally on the two involved edges. This reduces the practical
    memory consumption to a very small fraction of the theoretical consumption
    (when storing the whole tensor in the RAM).
    Especially if we increase the accuracy of the singular value decomposition
    by increasing the rank, this \textit{locality}-advantage plays an important
    role.
\end{remark}

\section{Converting PEPS to TT w/o approximation} \label{sec:peps2tt}
In Section \ref{sec:tc2tt}, the topology changed only slightly as we removed
just one edge from the graph to obtain a tree. The method that has been used
there can be also used for more complicated structures such as $2d$ grids
which we want to explain in this section.

We are going to convert a PEPS (projected entangled pair state; see
\cite{MVC:2009} for applications) structured tensor into a tree structured
tensor in the TT-format. In our framework of arbitrary tensor representations,
a PEPS-Tensor of order $16$ has the following formula:
\begin{align*}
    v = \sum_{j_1, \ldots, j_{24} = 1}^{r_1, \ldots, r_{24}} & v_1(j_4, j_1)
            \otimes v_2(j_1, j_5, j_2) \otimes v_3(j_2, j_6, j_3) \otimes
            v_4(j_3, j_7) \otimes\\
    & v_5(j_4, j_8, j_{11}) \otimes v_6(j_8, j_5, j_{12}, j_9) \otimes
            v_7(j_9, j_6, j_{13}, j_{10}) \otimes v_8(j_{10}, j_7, j_{14})
            \otimes \\
    &v_9(j_{11}, j_{18}, j_{15}) \otimes v_{10}(j_{15}, j_{12}, j_{19},
            j_{16}) \otimes v_{11}(j_{16}, j_{13}, j_{20}, j_{17}) \otimes
            v_{12} (j_{17}, j_{14}, j_{21}) \otimes \\
    &v_{13}(j_{18}, j_{22}) \otimes v_{14}(j_{22}, j_{19}, j_{23}) \otimes
            v_{15}(j_{23}, j_{20}, j_{24}) \otimes v_{16}(j_{24}, j_{21}),
\end{align*}
see Figure \ref{fig:peps} for the visualization.
The Motivation for this conversion is due to the fact that the complexity of
contracting a PEPS tensor is very high and the optimization procedure is not
stable (see \cite{HWS:2010} for an approximated contraction scheme). Tree
structured tensors on the other hand are easy to contract and stable.

Each tree with $p$ vertices has $p-1$ edges such that it is reasonable to
choose the simplest tree structure, which is a string, as the destination
structure. This is no restriction of the method, we just chose the string
structure only for visual reasons.

For the sake of simplified notations, we set $n_1 = \ldots = n_d =:n \in \N$,
so all our vector spaces $\mathcal{V}_\mu$ have the same dimension $n$.
\begin{figure}[H]
    \begin{center}
        \includegraphics[scale=1.0]{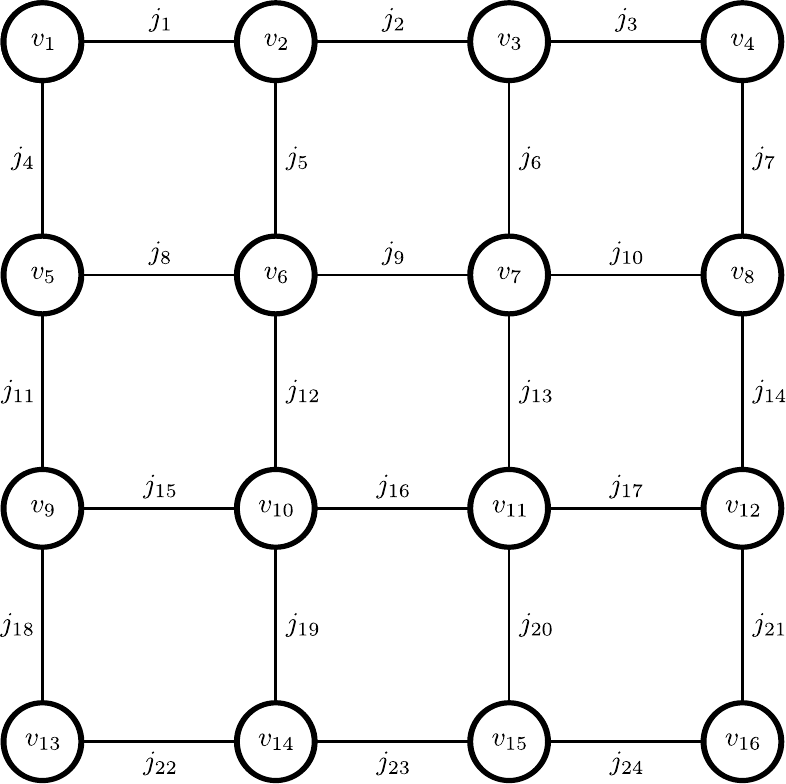}
        \caption{PEPS}
        \label{fig:peps}
    \end{center}
\end{figure}

We want to visualize the scheme that we introduced in Section \ref{sec:tc2tt}
by looking at the upper left corner of the PEPS tensor. Figure \ref{fig:part1}
displays the initial situation.\\
The first step, that we want to perform is moving the edge $j_4$ to the left.
    \[\sum_{j_1=1}^{r_1} v_1(j_4, j_1) \otimes v_2(j_1, j_2, j_5)
            \stackrel{SVD}{=} \sum_{j_1=1}^{\tilde{r}_1} v'_1(j_1) \otimes
            v'_2(j_1, j_2, j_4, j_5) \]
and we get the structure \ref{fig:part2}. Hereafter, we perform
    \[\sum_{j_8=1}^{r_8} v_5(j_4, j_8, j_{11}) \otimes v_6(j_5, j_8, j_9,
            j_{12}) \stackrel{SVD}{=} \sum_{j_8=1}^{\tilde{r}_8} v'_5(j_8,
            j_{11}) \otimes v'_6(j_4, j_5, j_8, j_9, j_{13}) \]
which is shown in Figure \ref{fig:part3}. The next step could be to move those
two edges $j_4$ and $j_5$ both further to the left, but this would increase the
complexity of the formulas as well as of the schematic drawings. Additionally,
it might be the case that the product of the moved edges ranks is to high (see
Remark \ref{rem:center_edge}).\\
So, we want to combine $j_4$ and $j_5$ into a new $j_5$ and we can do this by
one SVD:
    \[ \sum_{j_4, j_5 = 1}^{r_4, r_5} v'_2(j_1, j_2, j_4, j_5) \otimes
            v'_6(j_4, j_5, j_8, j_9, j_{12}) \stackrel{SVD}{=} \sum_{j_5=1}
            ^{\tilde{r}_5} \tilde{v}'_2(j_1, j_2, j_5) \otimes \tilde{v}'
            _6(j_5, j_8, j_9, j_{12}) \]
such that we get a structure as of Figure \ref{fig:part4}.
\begin{figure}[H]
    \begin{center}
        \subfigure[Initial state]{\label{fig:part1}
                \includegraphics[scale=1.0]{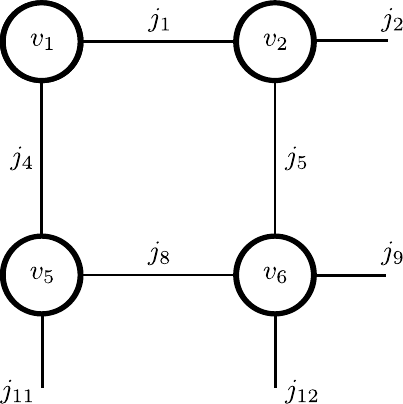}}
        \quad
        \subfigure[State after 1st step]{\label{fig:part2}
                \includegraphics[scale=1.0]{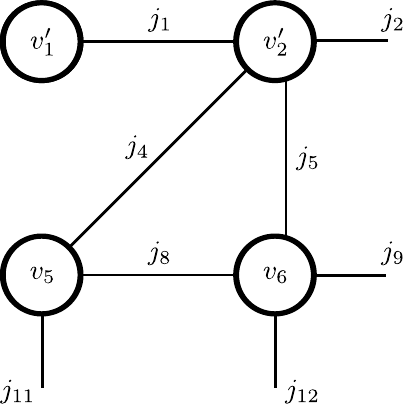}}
        \quad
        \subfigure[State after 2nd step]{\label{fig:part3}
                \includegraphics[scale=1.0]{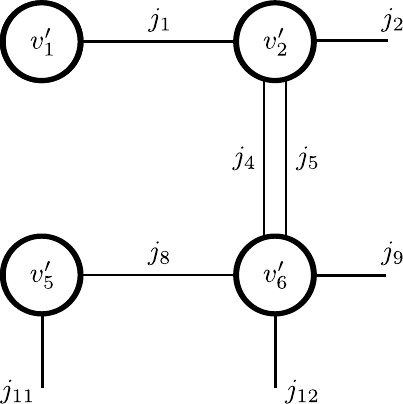}}
        \\ \hfill \\\hfill \\\hfill \\
        \subfigure[State after 3rd step]{\label{fig:part4}
                \includegraphics[scale=1.0]{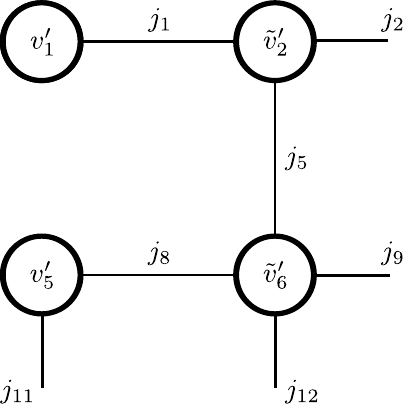}}
        \quad
        \subfigure[State after 4th step]{\label{fig:part5}
                \includegraphics[scale=1.0]{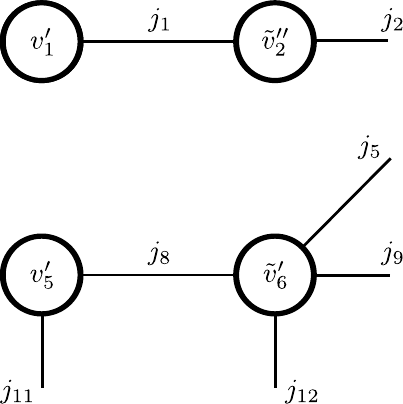}}
        \caption{Iteration series $1$ in details}
        \label{fig:parts}
    \end{center}
\end{figure}
Afterwards, we can proceed as before (see Figure \ref{fig:part5}).
If we apply this procedure until we have eliminated also edges $j_5$ and $j_6$
with edge $j_7$ left, we get a structure as in Figure \ref{fig:peps_series1}.
\begin{figure}[H]
    \begin{center}
        \includegraphics[scale=1.0]{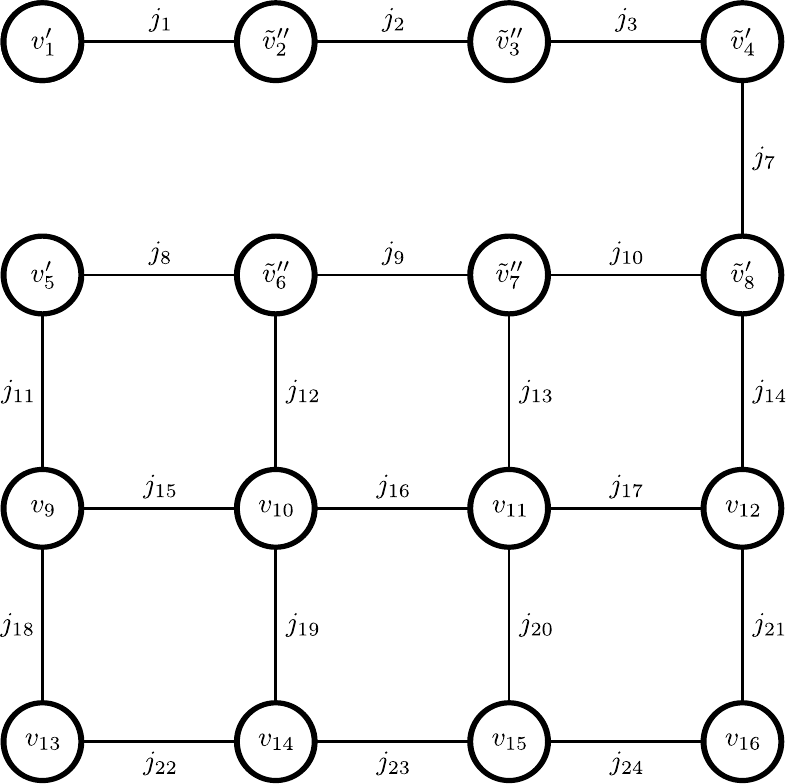}
        \caption{PEPS series 1}
        \label{fig:peps_series1}
    \end{center}
\end{figure}
Applying this scheme also on edges $(j_{18}, j_{19}, j_{20})$ we first get the
structure of Figure \ref{fig:peps_series2} and afterwards the structure of
Figure \ref{fig:peps_series3} if we eliminate edges $(j_{14}, j_{13}, j_{12})$.

\begin{figure}[H]
    \begin{center}
        \includegraphics[scale=1.0]{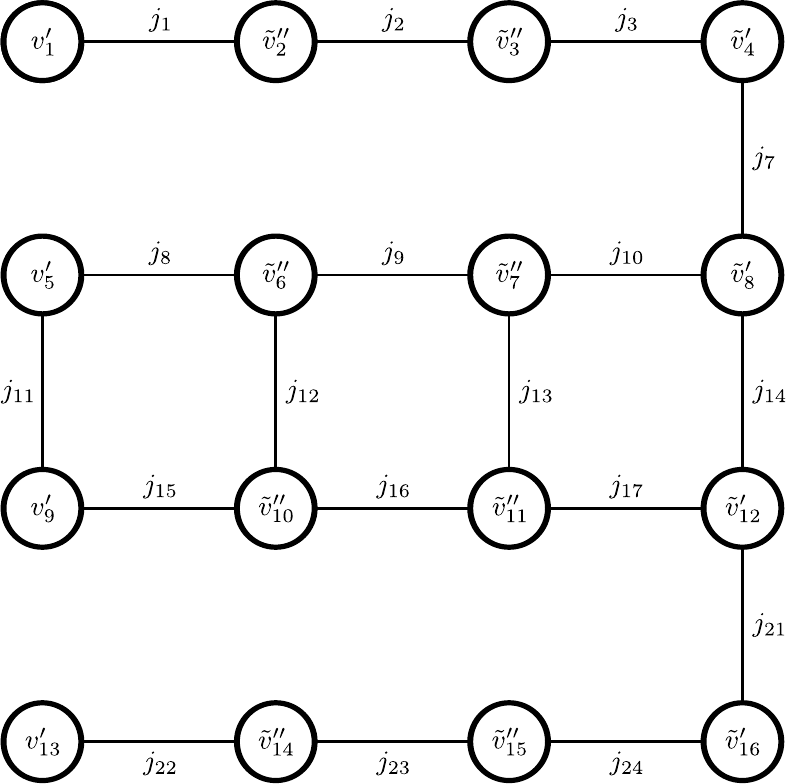}
        \caption{PEPS series 2}
        \label{fig:peps_series2}
    \end{center}
\end{figure}

The elimination processes of $(j_4, j_5, j_6)$ and of $(j_{18}, j_{19},
j_{20})$ do not affect each other such that we can state the rank distribution
independently:
\begin{align*}
    \tilde{r}_1 &= n \cdot \min(1, r_2 \cdot r_4 \cdot r_5) = n\\
    \tilde{r}_8 &= n \cdot \min(r_{11}, r_4 \cdot r_5 \cdot r_9 \cdot r_{12})\\
    \tilde{r}_5 &= n \cdot \min(\tilde{r}_1 \cdot r_2, \tilde{r}_8 \cdot r_9
            \cdot r_{12}) = n \cdot \min(n\cdot r_2, \tilde{r}_8 \cdot r_9
            \cdot r_{12})\\
    \\
    \tilde{r}_2 &= n \cdot \min(\tilde{r}_1, r_3 \cdot \tilde{r}_5 \cdot r_6)
            = n \cdot \min (n, r_3 \cdot \tilde{r}_5 \cdot r_6)\\
    \tilde{r}_9 &= n \cdot \min(\tilde{r}_8 \cdot r_{12}, \tilde{r}_5 \cdot r_6
            \cdot r_{10} \cdot r_{13})\\
    \tilde{r}_6 &= n \cdot \min(\tilde{r}_2 \cdot r_3, \tilde{r}_9 \cdot r_{10}
            \cdot r_{13})\\
    \\
    \tilde{r}_3 &= n \cdot \min(\tilde{r}_2, \tilde{r}_6 \cdot r_7)\\
    \tilde{r}_{10} &= n \cdot \min(\tilde{r}_9 \cdot r_{13}, \tilde{r}_6 \cdot
            r_7 \cdot r_{14})\\
    \tilde{r}_7 &= n \cdot \min(\tilde{r}_3, \tilde{r}_{10} \cdot r_{14}) \\
    \\
    \tilde{r}_{22} &= n \cdot \min(1, r_{18} \cdot r_{19} \cdot r_{23}) = n \\
    \tilde{r}_{15} &= n \cdot \min(r_{11}, r_{12} \cdot r_{16} \cdot r_{18}
            \cdot r_{19}) \\
    \tilde{r}_{19} &= n \cdot \min(\tilde{r}_{22} \cdot r_{23}, r_{12} \cdot
            \tilde{r}_{15} \cdot r_{16}) = n \cdot \min(n \cdot r_{23}, r_{12}
            \cdot \tilde{r}_{15} \cdot r_{16}) \\
    \\
    \tilde{r}_{23} &= n \cdot \min(\tilde{r}_{22}, \tilde{r}_{19} \cdot r_{20}
            \cdot r_{24}) = n \cdot \min(n, \tilde{r}_{19} \cdot r_{20}
            \cdot r_{24})\\
    \tilde{r}_{16} &= n \cdot \min(r_{12} \cdot \tilde{r}_{15}, r_{13} \cdot
            r_{17} \cdot \tilde{r}_{19} \cdot r_{20})\\
    \tilde{r}_{20} &= n \cdot \min(\tilde{r}_{23} \cdot r_{24}, r_{13} \cdot
            \tilde{r}_{16} \cdot r_{17})\\
    \\
    \tilde{r}_{24} &= n \cdot \min(\tilde{r}_{23}, \tilde{r}_{20} \cdot
            r_{21})\\
    \tilde{r}_{17} &= n \cdot \min(r_{13} \cdot \tilde{r}_{16}, r_{14} \cdot
            \tilde{r}_{20} \cdot r_{21})\\
    \tilde{r}_{21} &= n \cdot \min(\tilde{r}_{24}, r_{14} \cdot \tilde{r}_{17})
\end{align*}

\begin{figure}[H]
    \begin{center}
        \includegraphics[scale=1.0]{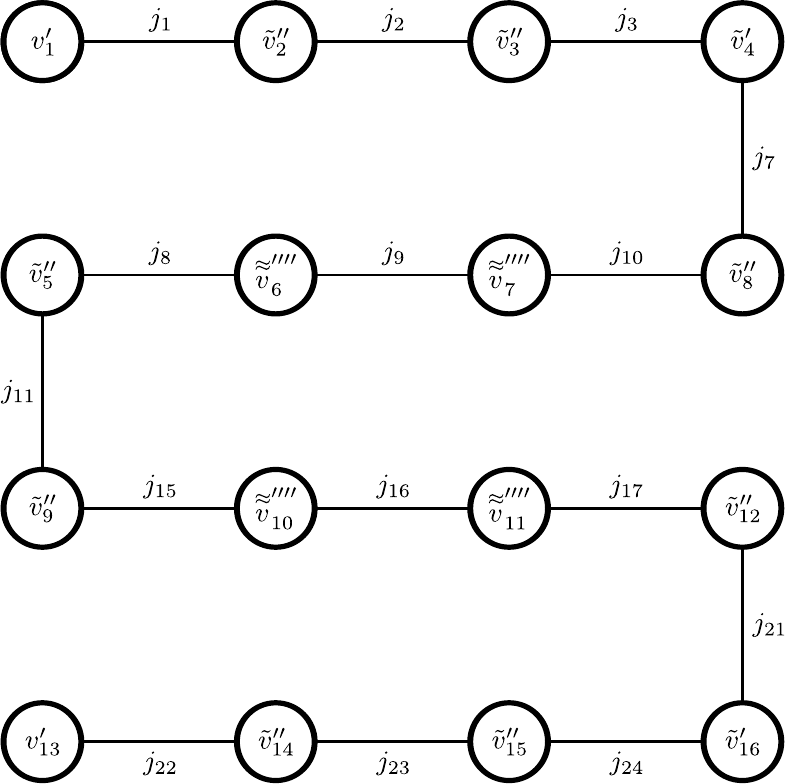}
        \caption{PEPS series 3}
        \label{fig:peps_series3}
    \end{center}
\end{figure}

\begin{align*}
    \stackrel{\approx}{r}_{10} &= n \cdot \min(\tilde{r}_7, \tilde{r}_9 \cdot
            r_{13} \cdot r_{14})\\
    \stackrel{\approx}{r}_{17} &= n \cdot \min(\tilde{r}_{21}, r_{13} \cdot
            r_{14} \cdot \tilde{r}_{16})\\
    \stackrel{\approx}{r}_{13} &= n \cdot \min(\tilde{r}_9 \cdot
            \stackrel{\approx}{r}_{10}, \tilde{r}_{16} \cdot \stackrel{
            \approx}{r}_{17})\\
    \\
    \stackrel{\approx}{r}_9 &= n \cdot \min(\stackrel{\approx}{r}_{10},
            \tilde{r}_8 \cdot r_{12} \cdot \tilde{r}_{13})\\
    \stackrel{\approx}{r}_{16} &= n \cdot \min(\stackrel{\approx}{r}_{17},
            r_{12} \cdot \tilde{r}_{13} \cdot \tilde{r}_{15})\\
    \stackrel{\approx}{r}_{12} &= n \cdot \min(\tilde{r}_8 \cdot \stackrel{
            \approx}{r}_9, \tilde{r}_{15} \cdot \stackrel{\approx}{r}_{16})\\
    \\
    \stackrel{\approx}{r}_8 &= n \cdot \min(\stackrel{\approx}{r}_9, r_{11}
            \cdot \tilde{r}_{12})\\
    \stackrel{\approx}{r}_{15} &= n \cdot \min(\stackrel{\approx}{r}_{16},
            r_{11} \cdot \tilde{r}_{12})\\
    \stackrel{\approx}{r}_{11} &= n \cdot \min(\stackrel{\approx}{r}_{15},
            \stackrel{\approx}{r}_8)
\end{align*}
Resulting in the tree structure which had to be established.

\begin{remark}
    Series $1$ and series $2$ are parallelizable without any restriction since
    they do not have a vertex in common. Series $3$ can be performed at the
    same time as series $1$ and $2$ but one has to be careful with overlapping
    cycle elimination series since it might be possible that $v_6$ is changed
    by two processes at the same time, for instance. This can be worked around
    by adding simple synchronizers. Note that there is at most one edge of the
    edges in common of two processes that may be changed simultaneously.

    Performing the conversion in parallel may lead to different ranks in the
    ranks that are adjusted more than once.
\end{remark}
\begin{remark}
    The order of the series is not unique. One can choose any other series that
    produces a string like tree.
\end{remark}

Since we are removing in general $d-2\sqrt{d}+1 = (\sqrt{d}-1)^2$ edges from
the graph, we get a complexity of the whole algorithm that is quadratic in
$d$ (with some factor $< 1$).

\section{Direct conversion from TT to TC w/o approximation}
For some applications it is needed to destroy the tree topology of a tensor
network in favor of a more complex structure. For example if one has converted
a cycle structured tensor network into a tree to perform stable algorithms and
after the computation the original structure is needed again.

The reader is reminded of the definition of the Tensor Train format that has
been introduced in Section \ref{sec:tc2tt}.

We want to convert the Tensor Train into a cyclic structured tensor
(Tensor Chain). In general, every Tensor Train is already a Tensor Chain,
since there is a rank one edge between $v_1$ and $v_d$ on every Tensor Train.
Our objective here is to get a \textit{balanced} distribution of the ranks in
the Tensor Chain and to obtain that, we have to perform a procedure that
successively moves an artificially inserted edge to the start $v_1$ and the end
$v_d$ of the train. In practice however, this leads to several problems that
are inspected in Section \ref{subsec:problems}.

This procedure depends on the singular value decomposition (SVD) and we want to
mark a node that has been change by the SVD once with $'$ and a node that has
been changed twice with $''$.

\subsection*{1st step}
Our first step will be to introduce an artificial edge between node $v_{
\left\lceil \frac{d}{2}\right\rceil}$ and $v_{\left\lceil \frac{d}{2}
\right\rceil+1}$ which we want to name $j_d$ (see Figure \ref{fig:1ststep}
for the visualization).
\begin{figure}[H]
    \begin{center}
        \includegraphics[width=1.00\textwidth]{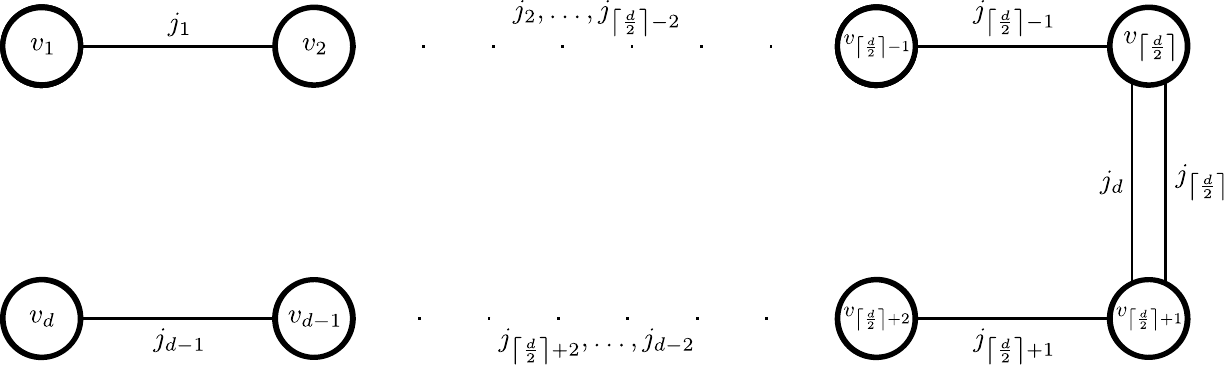}
        \caption{Artificially added edge $j_d$}
        \label{fig:1ststep}
    \end{center}
\end{figure}
We choose $r_d$ and $\tilde{r}_{\left\lceil \frac{d}{2}\right\rceil}$ such that
$r_d \cdot \tilde{r}_{\left\lceil \frac{d}{2}\right\rceil} \geq r_{\left\lceil
\frac{d}{2}\right\rceil}$ and define the mapping
\begin{align*}
    [\cdot, \cdot] : \{1, \ldots, \tilde{r}_{\left\lceil \frac{d}{2}
            \right\rceil}\} \times \{1, \ldots, r_d\} &\rightarrow \{1, \ldots,
            r_d \cdot \tilde{r}_{\left\lceil \frac{d}{2} \right\rceil}\}\\
    a,b &\mapsto a+\tilde{r}_{\left\lceil \frac{d}{2}\right\rceil} \cdot b,
\end{align*}
so $[\cdot, \cdot]$ is a bijective map to assign a $2$-tuple to a natural
number. Consequently, we have
\begin{align*}
    \sum\limits_{j_{\left\lceil \frac{d}{2}\right\rceil} = 1}^{r_{\left\lceil
            \frac{d}{2}\right\rceil}}& v_{\left\lceil \frac{d}{2}\right\rceil}
            \left(j_{\left\lceil \frac{d}{2}\right\rceil-1}, j_{\left\lceil
            \frac{d}{2}\right\rceil}\right) \otimes v_{\left\lceil \frac{d}{2}
            \right\rceil+1}\left(j_{\left\lceil \frac{d}{2}\right\rceil},
            j_{\left\lceil \frac{d}{2}\right\rceil+1}\right) = \\
    \sum\limits_{j_{\left\lceil \frac{d}{2}\right\rceil}, j_d=1}^{\tilde{r}_{
            \left\lceil \frac{d}{2}\right\rceil}, r_d}& v_{\left\lceil
            \frac{d}{2}\right\rceil} \left(j_{\left\lceil \frac{d}{2}
            \right\rceil-1}, \left[j_{\left\lceil \frac{d}{2}\right\rceil}, j_d
            \right]\right) \otimes v_{\left\lceil \frac{d}{2}\right\rceil+1}
            \left(\left[j_{\left\lceil \frac{d}{2}\right\rceil}, j_d \right],
            j_{\left\lceil \frac{d}{2}\right\rceil+1} \right).
\end{align*}

\subsection*{2nd step}
In this step, we want to move the edge $j_d$ from node $v_{\left\lceil
\frac{d}{2}\right\rceil+1}$ to $v_{\left\lceil \frac{d}{2}\right\rceil+2}$ and
as written before, we will do this with a single SVD.

\begin{figure}[H]
    \begin{center}
        \includegraphics[width=1.00\textwidth]{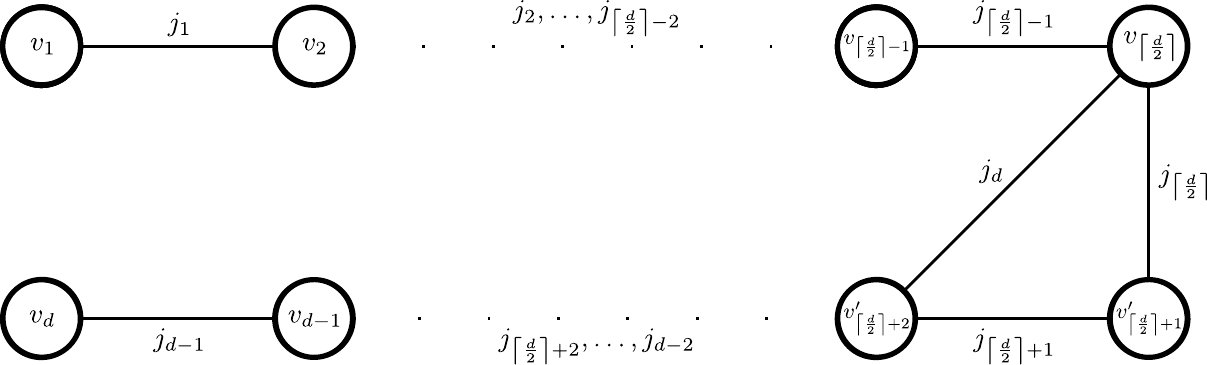}
        \label{fig:2ndstep}
    \end{center}
\end{figure}

\begin{align*}
    \sum\limits_{j_{\left\lceil \frac{d}{2}\right\rceil+1}=1}^{r_{\left\lceil
            \frac{d}{2}\right\rceil+1}}& v_{\left\lceil \frac{d}{2}\right\rceil
            +1} \left(\left[j_{\left\lceil \frac{d}{2}\right\rceil}, j_d
            \right], j_{\left\lceil \frac{d}{2}\right\rceil+1} \right) \otimes
            v_{\left\lceil \frac{d}{2}\right\rceil+2} \left(j_{\left\lceil
            \frac{d}{2}\right\rceil+1}, j_{\left\lceil \frac{d}{2}\right\rceil
            +2} \right) \stackrel{SVD}{=}\\
    \sum\limits_{j_{\left\lceil \frac{d}{2}\right\rceil+1}=1}^{\tilde{r}_{
            \left\lceil \frac{d}{2}\right\rceil+1}}&v'_{\left\lceil \frac{d}{2}
            \right\rceil+1}\left(j_{\left\lceil \frac{d}{2}\right\rceil}, j_{
            \left\lceil \frac{d}{2}\right\rceil+1} \right) \otimes v'_{
            \left\lceil \frac{d}{2}\right\rceil+2} \left(j_{\left\lceil
            \frac{d}{2}\right\rceil+1}, j_{\left\lceil \frac{d}{2}\right\rceil
            +2}, j_d\right)
\end{align*}

\subsection*{3rd step}
Edge $j_d$ has to be moved to node $v_{\left\lceil \frac{d}{2}\right\rceil-1}$
and this will be done analogously to the second step.
\begin{figure}[H]
    \begin{center}
        \includegraphics[width=1.00\textwidth]{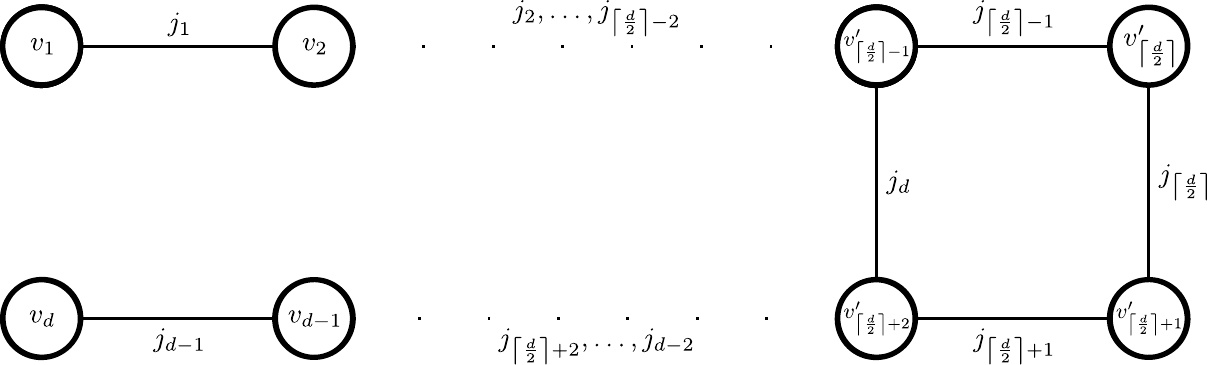}
        \caption{Situation after the 3rd step}
        \label{fig:3rdstep}
    \end{center}
\end{figure}

\begin{align*}
    \sum\limits_{j_{\left\lceil \frac{d}{2}\right\rceil-1}=1}^{r_{\left\lceil
            \frac{d}{2}\right\rceil-1}}& v_{\left\lceil \frac{d}{2}
            \right\rceil-1} \left(j_{\left\lceil \frac{d}{2}\right\rceil-2},
            j_{\left\lceil \frac{d}{2}\right\rceil-1} \right) \otimes v
            _{\left\lceil \frac{d}{2}\right\rceil} \left(j_{\left\lceil
            \frac{d}{2}\right\rceil-1}, \left[j_{\left\lceil \frac{d}{2}
            \right\rceil}, j_d \right]\right)\stackrel{SVD}{=}\\
    \sum\limits_{j_{\left\lceil \frac{d}{2}\right\rceil-1}=1}^{\tilde{r}_{
            \left\lceil \frac{d}{2}\right\rceil-1}}&v'_{\left\lceil \frac{d}{2}
            \right\rceil-1}\left(j_{\left\lceil \frac{d}{2}\right\rceil-2},
            j_{\left\lceil \frac{d}{2}\right\rceil-1}, j_d \right) \otimes
            v'_{\left\lceil \frac{d}{2}\right\rceil} \left(j_{\left\lceil
            \frac{d}{2}\right\rceil-1}, j_{\left\lceil \frac{d}{2}\right\rceil}
            \right)
\end{align*}

\begin{remark}
    Step $2$ and $3$ are independent of each other and can be performed in
    parallel.
\end{remark}

\subsection*{Final step}
After moving the edge successively further towards $v_d$ and $v_1$, we get the
situation that is visualized in Figure \ref{fig:prelaststep}.
The last step in the conversion is to move edge $j_d$ from node $v_2$ to node
$v_1$ with the described procedure.
\begin{figure}[H]
    \begin{center}
        \includegraphics[width=1.00\textwidth]{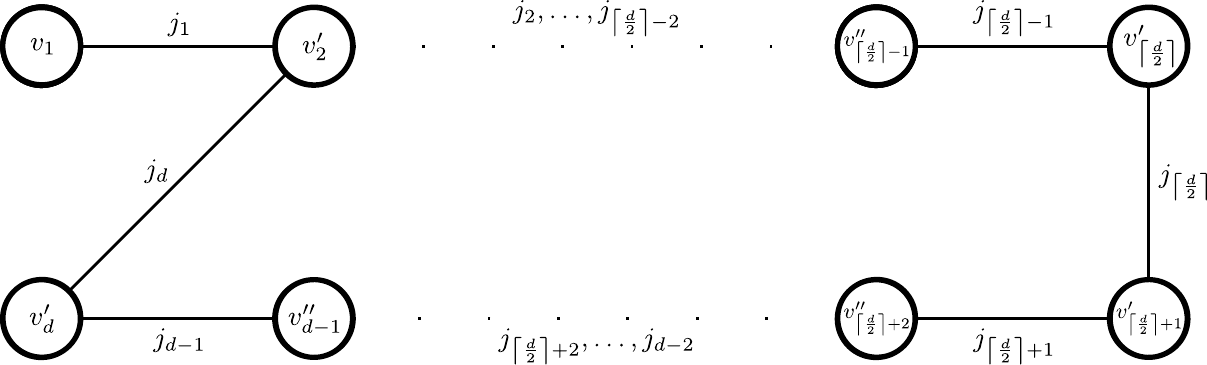}
        \caption{Situation before the final step}
        \label{fig:prelaststep}
    \end{center}
\end{figure}

So in formulas, one SVD is performed to make the edge shift:
\begin{align*}
    \left(\sum\limits_{j_1=1}^{r_1} v_1(j_1) \otimes v'_2(j_1, j_2, j_d)\right)
            _{j_d, j_2} \stackrel{SVD}{=} \left(\sum\limits_{j_1=1}^{
            \tilde{r}_1} v'_1(j_d, j_1) \otimes v''_2(j_1, j_2)\right)_{j_d,
            j_2},
\end{align*}
which is resulting in the structure that we wanted to obtain (see Figure
\ref{fig:laststep}).
\begin{figure}[H]
    \begin{center}
        \includegraphics[width=1.00\textwidth]{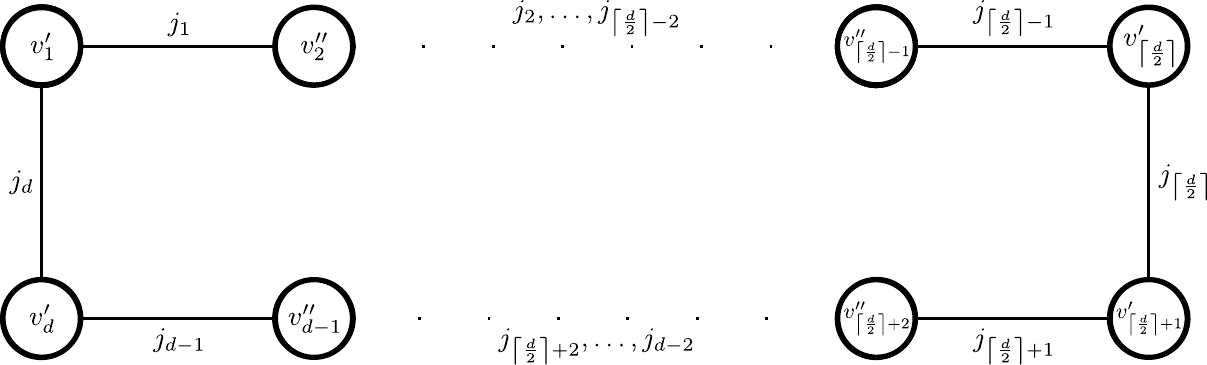}
        \caption{Situation after the final step}
        \label{fig:laststep}
    \end{center}
\end{figure}

\subsection*{Ranks}
After we have chosen the ranks $r_d$ and $\tilde{r}_{\left\lceil \frac{d}{2}
\right\rceil}$, we update all remaining $d-2$ ranks and get the following
upper bounds
\begin{align}
    \tilde{r}_i &= \min(n_i \cdot r_{i-1}\cdot r_d, n_{i+1}\cdot \tilde{r}
            _{i+1}) \leq n \cdot r \cdot r_d \qquad \text{for } i = 1, \ldots,
            \left\lceil \frac{d}{2} \right\rceil-1 \label{eq:tc_ranks1}\\
    \tilde{r}_i &= \min(n_{i+1}\cdot r_{i+1}\cdot r_d, n_i \cdot \tilde{r}
            _{i-1}) \leq n \cdot r \cdot r_d \qquad \text{for }  i =
            \left\lceil \frac{d}{2} \right\rceil +1, \ldots, d-1
            \label{eq:tc_ranks2}.
\end{align}

\begin{theorem}
    The computational cost of the described scheme is in
        \[ \mathcal{O} \left((d-2) \cdot n^4 r^3 r_d^3 \right) \]
    so it is again linear in $d$ with $r:= \max(r_1, \ldots, r_{d-1})$.
\end{theorem}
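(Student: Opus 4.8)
The plan is to mirror the cost analysis already carried out for the TC-to-TT conversion: bound every rank that occurs, bound the size of each matrix fed into an SVD, multiply by the per-decomposition cost, and sum over the steps. First I would invoke the rank estimates \eqref{eq:tc_ranks1} and \eqref{eq:tc_ranks2}, which immediately give the uniform bound
\[
    \tilde{r}_i \leq n \cdot r \cdot r_d \qquad \forall i \in \{1, \ldots, d-1\} \setminus \{\lceil d/2 \rceil\},
\]
so that, apart from the freely chosen central rank $\tilde{r}_{\lceil d/2 \rceil}$, every updated edge is controlled by the single quantity $n r r_d$.

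Next I would determine the shape of the matrix decomposed in a generic edge-shift step. In each such step the two cores adjacent to the travelling edge $j_d$ are contracted over their shared rank and then reshaped so that $j_d$ is handed over to the neighbouring node. On the side kept fixed the matrix carries one physical index (dimension $n$) together with the already-updated rank $\tilde{r}$, so its row dimension is at most $n \cdot \tilde{r} \leq n^2 r r_d$; on the side towards which $j_d$ moves the matrix carries one physical index ($n$), one still-original rank (at most $r$) and the travelling edge itself (dimension $r_d$), so its column dimension is at most $n r r_d$. Hence every SVD acts on a matrix no larger than $n^2 r r_d \times n r r_d$, and with the standard cost $\mathcal{O}(m k \min(m,k))$ for the SVD of an $m \times k$ matrix the work of a single step is bounded by $(n^2 r r_d)(n r r_d)(n r r_d) = n^4 r^3 r_d^3$.

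Finally I would count the decompositions. Inserting the artificial edge in the first step needs no SVD; afterwards $j_d$ is carried rightwards from $v_{\lceil d/2 \rceil+1}$ to $v_d$ in $d - \lceil d/2 \rceil - 1$ steps and leftwards from $v_{\lceil d/2 \rceil}$ to $v_1$ in $\lceil d/2 \rceil - 1$ steps, for a total of exactly $d - 2$ SVDs. Multiplying the per-step bound by this count gives $\mathcal{O}((d-2) \cdot n^4 r^3 r_d^3)$, which is linear in $d$. The step I expect to be the real obstacle is the index bookkeeping of the second paragraph: one must verify that in every shift the travelling edge $j_d$ together with exactly one original rank lands on one side of the reshape, while the accumulated rank $\tilde{r}$ and a single physical index land on the other, since any other grouping would enlarge a matrix dimension and destroy the clean $n^4 r^3 r_d^3$ bound. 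Checking that the first few steps and the central edge respect, rather than exceed, this generic estimate is the remaining routine verification.
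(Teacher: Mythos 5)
Your proposal is correct and follows the same route as the paper: the paper's proof is a one-line appeal to the rank bounds \eqref{eq:tc_ranks1} and \eqref{eq:tc_ranks2} as determining the matrix sizes, and you simply make that explicit by deriving the $n^2 r r_d \times n r r_d$ bound on each decomposed matrix, the resulting $n^4 r^3 r_d^3$ per-SVD cost, and the count of $d-2$ edge shifts. The details you fill in (including that the first step is a mere reindexing requiring no SVD) are exactly the ones the paper leaves implicit.
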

\begin{proof}
    Follows directly from Equations \eqref{eq:tc_ranks1} and
    \eqref{eq:tc_ranks2} since these equations determine the upper bound for
    the matrix sizes.
\end{proof}

\subsection{Problems} \label{subsec:problems}
The main problem has its roots in the first step where an artificial edge is
introduced into the graph structure. There we do not a priori know what the
\textit{best} rank splitting is and we also do not know which is the
\textit{best} assignment for the $[\cdot, \cdot]$ function. If we can solve
these problems, we are - for example - able to convert a tensor chain formatted
tensor into a tensor train formatted tensor and back without different ranks
for the tensor chain tensor in before the conversion and after the
back-conversion.

\subsection{Numerical example}
We have the same setup as in \ref{subsec:tc2tt_numerical_example} (except that
we are converting a Tensor Train representation into a Tensor Chain
representation) and obtain the following results with approximated SVD with a
cutoff at $10^{-10}$:
\begin{table}[H]
    \centering
    \begin{tabular}{c|c|c|c|c}
        $d$     & CPU-time & Avg. rank & Max. Rank & Rel. error \\
        \hline
        $4$     & $0.01s$  & $7.25$    & $12$      & $1.49\cdot 10^{-8}$ \\
        $10$    & $0.09s$  & $10.1$    & $12$      & $4.21\cdot 10^{-8}$ \\
        $100$   & $1.75s$  & $11.81$   & $12$      & $1.05\cdot 10^{-8}$ \\
        $1000$  & $18.5s$  & $11.98$   & $12$      & $1.17\cdot 10^{-7}$ \\
        $10000$ & $189s$   & $12$      & $12$      & $2.33\cdot 10^{-7}$
    \end{tabular}
    \caption{Approximated TT to TC conversion}
    \label{tab:approxtt2tc}
\end{table}

To illustrate the problem that has been described in Section
\ref{subsec:problems}, we will run a second experiment: first, we will
transform a tensor chain tensor into a tensor train tensor and then, we will
re-transform it back to the original chain format. In this experiment, we also
have the same setup as in \ref{subsec:tc2tt_numerical_example} (initial TC
representation rank is $(6, \ldots, 6)$). No SVD approximation is considered.
\begin{table}[H]
    \centering
    \begin{tabular}{c|c|c}
        $d$  & Avg. converted TT-rank & Avg. re-converted TC-rank\\
        \hline
        $4$  & $40$                   & $55$  \\
        $6$  & $244$                  & $224$ \\
        $8$  & $648.57$               & $1815$
    \end{tabular}
    \caption{Exact TC to TT to TC conversion}
    \label{tab:exacttc2tt2tc}
\end{table}

In the previous computation, we used the full SVD ranks such that we did not
benefit from approximated ranks. So we are going to change the algorithm to
not use the full SVD rank, but an approximated SVD rank with an accuracy of
$10^{-10}$ for each singular value decomposition for both conversions. The
error is the relative error with respect to the initial representation. The
initial TC representation rank is also $(r_1, \ldots, r_d) = (6, \ldots, 6)$.
\begin{table}[H]
    \centering
    \begin{tabular}{c|c|c|c|c}
        $d$  & Avg. converted TT-rank & Rel. error          & Avg. re-converted
                TC-rank & Rel. error\\
        \hline
        $4$  & $18.67$                & $4.47\cdot 10^{-8}$ & $33$
                        & $5.96\cdot 10^{-8}$ \\
        $6$  & $25.6$                 & $3.33\cdot 10^{-8}$ & $26$
                        & $1.86\cdot 10^{-8}$ \\
        $8$  & $28.57$                & $2.58\cdot 10^{-8}$ & $28.5$
                        & $3.65\cdot 10^{-8}$ \\
        $10$ & $30.22$                & $2.98\cdot 10^{-8}$ & $30$
                        & $1.05\cdot 10^{-8}$ \\
        $12$ & $31.27$                & $4.47\cdot 10^{-8}$ & $31$
                        & $5.58\cdot 10^{-8}$ \\
        $20$ & $33.26$                & $3.33\cdot 10^{-8}$ & $33$
                        & $4.94\cdot 10^{-8}$ \\
        $30$ & $34.21$                & $3.64\cdot 10^{-8}$ & $34$
                        & $3.65\cdot 10^{-8}$
    \end{tabular}
    \caption{Approximated TC to TT to TC conversion}
    \label{tab:approxtc2tt2tc}
\end{table}

\section{Error estimate}
While shifting an edge, we can introduce an error by omitting small singular
of the SVD's result. Doing that, we can represent matrix $A$ by an approximated
matrix $\tilde{A}$ where we can control the error $\|A-\tilde{A} \|$ with these
singular values. The influence on the whole tensor network representation has
to be investigated: We consider the change that is made in the second step of
the TC to TT conversion of Section \ref{sec:tc2tt}. We define
\begin{align*}
    v := & \sum_{j_1=1}^{\tilde{r}_1} \sum_{j_2, \ldots, j_d = 1}^{r_2,
            \ldots, r_d} v_(j_1) \otimes v_2'(j_1, j_2, j_d) \otimes v_3(j_2,
            j_3) \otimes \ldots \otimes v_d(j_{d-1}, j_d)
    \intertext{and}
    \tilde{v} := & \sum_{j_1, j_{d-1}=1}^{\tilde{r}_1, \tilde{r}_{d-1}}
            \sum_{j_2, \ldots, j_{d-2}, j_d=1}^{r_2, \ldots, r_{d-2}, r_d}
            v_1'(j_1) \otimes v_2'(j_1, j_2, j_d) \otimes v_3(j_2, j_3) \otimes
            \ldots \\
    & \qquad \otimes v_{d-2}(j_{d-3}, j_{d-2}) \otimes v_{d-1}'(j_{d-2},
            j_{d-1}, j_d) \otimes v_d'(j_{d-1}),
    \intertext{such that we have to estimate $\| v - \tilde{v} \|$. To shorten
            the notation, we introduce}
    \hat{v}(j_d, j_{d-2}) := & \sum_{j_1=1}^{\tilde{r}_1} \sum_{j_2, \ldots,
            j_{d-3} = 1}^{r_2, \ldots, r_{d-3}} v_1'(j_1) \otimes v_2'(j_1,
            j_2, j_d) \otimes v_3(j_2, j_3) \otimes \ldots \otimes
            v_{d-2}(j_{d-3}, j_{d-2})
    \intertext{and}
    B(j_d, j_{d-2}) := & \sum_{j_{d-1}=1}^{r_{d-1}} v_{d-1}(j_{d-2}, j_{d-1})
            \otimes v_d(j_{d-1}, j_d) - \sum_{j_{d-1}=1}^{\tilde{r}_{d-1}}v_{d-1}'(j_{d-2},
            j_{d-1}, j_d) \otimes v_d'(j_{d-1}).
\end{align*}
This leads into the following estimate
\begin{align*}
    \|v - \tilde{v}\| =& \left\|\sum_{j_d, j_{d-2} =1}^{r_d, r_{d-2}}
            \hat{v}(j_d, j_{d-2}) \otimes B(j_d, j_{d-2}) \right\| \\
    \leq & \sum_{j_d, j_{d-2} =1}^{r_d, r_{d-2}} \|\hat{v}(j_d, j_{d-2}) \|
            \cdot \| B(j_d, j_{d-2}) \| \\
    \leq & \left(\sum_{j_d, j_{d-2} =1}^{r_d, r_{d-2}} \|\hat{v}(j_d, j_{d-2})
            \|^2 \right)^{\frac{1}{2}} \left(\sum_{j_d, j_{d-2} =1}^{r_d,
            r_{d-2}} \|B(j_d, j_{d-2}) \|^2 \right)^{\frac{1}{2}}\\
    =&\left(\sum_{j_d, j_{d-2} =1}^{r_d, r_{d-2}} \|\hat{v}(j_d, j_{d-2})
            \|^2 \right)^{\frac{1}{2}} \cdot \|A - \tilde{A} \|
\end{align*}
with the help of the triangle inequality and the Cauchy-Schwarz-inequality (in
that order). This gives us a precise estimate on when we are allowed to cut
off singular values while still maintaining a certain error bound for $\| v -
\tilde{v} \|$.

This error estimate can be easily generalized to other tensor representations.

\section{Alternative approaches}
The proposed algorithm is of course not the only way to convert an arbitrary
tensor network into a tensor tree network. For example, one could also
evaluate the tensor network to obtain the full tensor and perform the Vidal
decomposition (see \cite{10.1137/090748330, PhysRevLett.91.147902}) in order to
obtain a tensor in the Tensor Train format. Another possibility is to decompose
the full tensor with a high order SVD (HOSVD, see \cite{lathauwer:1253}) into a
hierarchically formated tensor (see
\cite{springerlink:10.1007/s00041-009-9094-9}). Evaluating the full tensor for
large $d$ however is in general not feasible due to the amount of storage and
computational effort that is needed.

Another general approach is to fix the resulting format and use approximation
algorithms such as ALS, DMRG (both are non linear block Gauss-Seidel methods,
see \cite{EHHS:2011}). This
however is no direct conversion, but an approximation that has certain
convergence rates. The advantage there is that this approach allows us to use
general tensor representations without being restricted to tensor networks.

\bibliographystyle{unsrturl}
\bibliography{common}

\begin{thebibliography}{10}

\bibitem{PhysRevB.82.205105}
V.~Murg, F.~Verstraete, \"O. Legeza, and R.~M. Noack.
\newblock {S}imulating strongly correlated quantum systems with tree tensor
  networks.
\newblock {\em {P}hys. {R}ev. {B}}, 82:205105, 2010.
\newblock \href {http://dx.doi.org/10.1103/PhysRevB.82.205105}
  {\path{doi:10.1103/PhysRevB.82.205105}}.

\bibitem{PhysRevB.83.134421}
L.~Wang, I.~Pi\ifmmode \check{z}\else \v{z}\fi{}orn, and F.~Verstraete.
\newblock {M}onte {C}arlo simulation with tensor network states.
\newblock {\em {P}hys. {R}ev. {B}}, 83:134421, Apr 2011.
\newblock \href {http://dx.doi.org/10.1103/PhysRevB.83.134421}
  {\path{doi:10.1103/PhysRevB.83.134421}}.

\bibitem{HRS:2010}
S.~Holtz, T.~Rohwedder, and R.~Schneider.
\newblock {T}he {A}lternating {L}inear {S}cheme for {T}ensor {O}ptimisation in
  the {TT} {F}ormat.
\newblock {\em DFG SPP 1324 Preprint}, 71, 2010.

\bibitem{springerlink:10.1007/s00041-009-9094-9}
W.~Hackbusch and S.~Kühn.
\newblock {A} {N}ew {S}cheme for the {T}ensor {R}epresentation.
\newblock {\em {J}. {F}ourier {A}nal. {A}ppl.}, 15:706--722, 2009.
\newblock \href {http://dx.doi.org/10.1007/s00041-009-9094-9}
  {\path{doi:10.1007/s00041-009-9094-9}}.

\bibitem{oseledets:2295}
I.~V. Oseledets.
\newblock {T}ensor-{T}rain {D}ecomposition.
\newblock {\em {SIAM} {J}. {S}ci. {C}omput.}, 33(5):2295--2317, 2011.
\newblock \href {http://dx.doi.org/10.1137/090752286}
  {\path{doi:10.1137/090752286}}.

\bibitem{Landsberg:arXiv1105.4449}
J.~M. Landsberg, Y.~Qi, and K.~Ye.
\newblock {O}n the geometry of tensor network states.
\newblock {\em arXiv:1105.4449 [math.AG]}, 2011.
\newblock \href {http://arxiv.org/abs/1105.4449} {\path{arXiv:1105.4449}}.

\bibitem{H:2012_1}
Wolfgang Hackbusch.
\newblock {\em {T}ensor {S}paces and {N}umerical {T}ensor {C}alculus}.
\newblock Springer, 2012.
\newblock \href {http://dx.doi.org/10.1007/978-3-642-28027-6}
  {\path{doi:10.1007/978-3-642-28027-6}}.

\bibitem{EHHS:2011}
M.~{Espig}, W.~{Hackbusch}, S.~{Handschuh}, and R.~{Schneider}.
\newblock {O}ptimization {P}roblems in {C}ontracted {T}ensor {N}etworks.
\newblock {\em MIS Preprint}, 66, 2011.
\newblock Available from:
  \url{http://www.mis.mpg.de/publications/preprints/2011/2011-66.html}.

\bibitem{K:2010}
B.~N. Khoromskij.
\newblock $\mathcal{O}(d \log {N})$-{Q}uantics {A}pproximation of ${N}-d$
  {T}ensors in {H}igh-{D}imensional {N}umerical {M}odeling.
\newblock {\em {C}onst. {A}pprox.}, 34:1--24, 2010.
\newblock \href {http://dx.doi.org/10.1007/s00365-011-9131-1}
  {\path{doi:10.1007/s00365-011-9131-1}}.

\bibitem{IVO:2009}
I.~V. Oseledets.
\newblock {C}ompact matrix form of the $d$-dimensional tensor decomposition.
\newblock {\em Preprint}, 09-01, 2009.
\newblock Available from: \url{http://spring.inm.ras.ru/osel/?p=15}.

\bibitem{ESKWWHA:2012}
M.~Espig, M.~Schuster, A.~Killaitis, N.~Waldren, P.~Wähnert, S.~Handschuh, and
  H.~Auer.
\newblock {TensorCalculus} library, 2008--2012.
\newblock Available from: \url{http://gitorious.org/tensorcalculus}.

\bibitem{MVC:2009}
V.~Murg, F.~Verstraete, and J.~I. Cirac.
\newblock {E}xploring frustrated spin–systems using {P}rojected {E}ntangled
  {P}air {S}tates ({PEPS}).
\newblock {\em arXiv:0901.2019v1 [cond-mat.str-el]}, 2009.
\newblock \href {http://arxiv.org/abs/0901.2019v1} {\path{arXiv:0901.2019v1}}.

\bibitem{HWS:2010}
T.~Huckle, K.~Waldherr, and T.~Schulte-Herbrüggen.
\newblock {C}omputations in {Q}uantum {T}ensor {N}etworks.
\newblock {\em Preprint}, 2010.
\newblock Available from:
  \url{http://www5.in.tum.de/pub/CompQuantTensorNetwork.pdf}.

\bibitem{10.1137/090748330}
I.~V. Oseledets and E.~E. Tyrtyshnikov.
\newblock {B}reaking the {C}urse of {D}imensionality, {O}r {H}ow to {U}se {SVD}
  in {M}any {D}imensions.
\newblock {\em {SIAM} {J}. {S}ci. {C}omput.}, 31(5):3744--3759, 2009.
\newblock \href {http://dx.doi.org/DOI:10.1137/090748330}
  {\path{doi:DOI:10.1137/090748330}}.

\bibitem{PhysRevLett.91.147902}
Guifr\'e Vidal.
\newblock {E}fficient {C}lassical {S}imulation of {S}lightly {E}ntangled
  {Q}uantum {C}omputations.
\newblock {\em {P}hys. {R}ev. {L}ett.}, 91:147902, 2003.
\newblock \href {http://dx.doi.org/10.1103/PhysRevLett.91.147902}
  {\path{doi:10.1103/PhysRevLett.91.147902}}.

\bibitem{lathauwer:1253}
L.~De Lathauwer, B.~De Moor, and J.~Vandewalle.
\newblock {A} {M}ultilinear {S}ingular {V}alue {D}ecomposition.
\newblock {\em {SIAM} {J}. {M}atrix {A}nal. {A}ppl.}, 21(4):1253--1278, 2000.
\newblock \href {http://dx.doi.org/10.1137/S0895479896305696}
  {\path{doi:10.1137/S0895479896305696}}.

\end{thebibliography}

\end{document}